\newcommand{\df}{\mathrm{d}}
\newtheorem{theorem}{Theorem}[section]
\newtheorem{proposition}{Proposition}[section]
\newtheorem{lemma}{Lemma}[section]
\theoremstyle{definition}
\theoremstyle{remark}
  \newtheorem{remark}{Remark}[section]
\numberwithin{equation}{section}
\setlist[itemize]{parsep=0.4em}
\setlist[enumerate]{parsep=0.4em}
\title{Isoparametric surfaces in $\mathbb{E}(\kappa,\tau)$-spaces}
\date{}
\author[M.~Dom\'{\i}nguez-V\'{a}zquez]{Miguel Dom\'{\i}nguez-V\'{a}zquez}
\address{Instituto de Ciencias Matem\'aticas (CSIC-UAM-UC3M-UCM), Madrid, Spain.}
\email{miguel.dominguez@icmat.es}
\author{Jos\'{e} M. Manzano}
\address{Instituto de Ciencias Matem\'aticas (CSIC-UAM-UC3M-UCM), Madrid, Spain.}
\email{manzanoprego@gmail.com}
\thanks{The first author acknowledges support by the projects MTM2016-75897-P (AEI/FEDER, Spain), ED431F 2017/03 (Xunta de Galicia, Spain), as well as by the European Union's Horizon 2020 research and innovation programme under the Marie Sklodowska-Curie grant agreement No.~745722. 
The second author has been supported by the Spanish MCyT-Feder research project MTM2011-22547. 
Both authors have been supported by the project ICMAT Severo Ochoa project SEV-2015-0554 (MINECO, Spain).}
\subjclass[2010]{Primary 53A10; Secondary 53C30, 53B25}
\keywords{Homogeneous $3$-manifolds, constant principal curvatures, homogeneous surfaces, isoparametric surfaces}
\begin{document}

\begin{abstract}
We provide an explicit classification of the following four families of surfaces in any homogeneous $3$-manifold with $4$-dimensional isometry group: isoparametric surfaces, surfaces with constant principal curvatures, homogeneous surfaces, and surfaces with constant mean curvature and vanishing Abresch-Rosenberg differential.
\end{abstract}

\maketitle

\section{Introduction}

A hypersurface of a Riemannian manifold is called isoparametric if it and its locally defined nearby equidistant hypersurfaces have constant mean curvature. In the 30s, Cartan characterized isoparametric hypersurfaces in space forms as those with constant principal curvatures, and achieved their classification in hyperbolic spaces $\mathbb{H}^n$. Segre obtained a similar result for Euclidean spaces $\mathbb{R}^n$. In both cases, isoparametric hypersurfaces are also open parts of extrinsically homogeneous hypersurfaces, that is, codimension one orbits of isometric actions on the ambient space. Surprisingly, in spheres $\mathbb{S}^n$ there are inhomogeneous examples and, indeed, the classification problem in spheres is much more involved and rich, giving rise to important recent contributions, such as~\cite{CCJ,Chi,Imm,Mi}.

In spaces of nonconstant curvature, very few classification results are known. Isoparametric hypersurfaces in complex hyperbolic spaces $\mathbb{C} \mathbb{H}^n$ and in $\mathbb{S}^2\times\mathbb{S}^2$ have been recently classified in~\cite{DDS} and~\cite{Ur}, respectively, and also in complex and quaternionic projective spaces, $\mathbb{C} \mathbb{P}^n$ and $\mathbb{H}  \mathbb{P}^n$, for almost all dimensions~\cite{Do,DG}. In most of these cases, and also in many Damek--Ricci harmonic spaces~\cite{DD}, there are isoparametric hypersurfaces with nonconstant principal curvatures which are, therefore, inhomogeneous, which contrasts with the situation in space forms. In general Riemannian manifolds, homogeneous hypersurfaces are always isoparametric and have constant principal curvatures, but none of the latter conditions necessarily implies homogeneity. Moreover, in spaces of nonconstant curvature, the isoparametric property and the constancy of the principal curvatures are, a priori, unrelated conditions. Hence, it is also interesting to address the classification problem of hypersurfaces with constant principal curvatures in specific Riemannian manifolds. This is a very complicated problem, even in ambient manifolds such as nonflat complex space forms $\mathbb{C} \mathbb{P}^n$ and $\mathbb{C} \mathbb{H}^n$~\cite{DD:indiana} or product spaces $\mathbb{S}^n\times\mathbb{R}$~or~$\mathbb{H}^n\times\mathbb{R}$~\cite{CS}.

The main purpose of this paper is to initiate the investigation of isoparametric surfaces and surfaces with constant principal curvatures in homogeneous $3$-manifolds. Simply connected homogeneous $3$-manifolds are classified attending to the dimension of their isometry group, which is equal to $3$, $4$ or $6$. If it is equal to $6$, then one obtains the space forms, but little is known about isoparametric surfaces if the dimension of the isometry group is $3$ or $4$. We will tackle the problem when the dimension is $4$, where such homogeneous $3$-manifolds are classified in the 2-parameter family of the so-called $\mathbb{E}(\kappa,\tau)$-spaces, with $\kappa-4\tau^2\neq 0$. The space $\mathbb{E}(\kappa,\tau)$ is the total space of a Riemannian submersion with bundle curvature $\tau$ over a simply connected complete surface $\mathbb{M}^2(\kappa)$ of constant curvature $\kappa$, see~\cite{Dan}. If $\tau=0$, one obtains the product spaces $\mathbb{H}^2(\kappa)\times\mathbb{R}$ and $\mathbb{S}^2(\kappa)\times\mathbb{R}$; if $\tau\neq 0$, then one has the Heisenberg space $\mathrm{Nil}_3$ ($\kappa=0$), the Berger spheres ($\kappa>0$), and the universal cover of the special linear group $\widetilde{\mathrm{SL}}_2(\mathbb{R})$ with some special left-invariant metrics~($\kappa<0$).   

Our main theorem provides the complete classification of isoparametric surfaces and surfaces with constant principal curvatures in $\mathbb{E}(\kappa,\tau)$-spaces of nonconstant curvature, and guarantees the homogeneity of such surfaces.

\begin{theorem}\label{thm:main}
Let $\Sigma$ be an immersed surface in $\mathbb{E}(\kappa,\tau)$, $\kappa-4\tau^2\neq 0$. The following assertions are equivalent:
\begin{enumerate}[label=(\roman*)]
	\item $\Sigma$ is an open subset of a homogeneous surface;
	\item $\Sigma$ is isoparametric;
	\item $\Sigma$ has constant principal curvatures;
	\item $\Sigma$ is an open subset of one of the following complete surfaces:
	\begin{enumerate}[label=(\alph*)]
	 \item a vertical cylinder over a complete curve of constant curvature in~$\mathbb{M}^2(\kappa)$, 
	 \item a horizontal slice $\mathbb{M}^2(\kappa)\times\{t_0\}$ with $\tau=0$,
	 \item a parabolic helicoid $P_{H,\kappa,\tau}$ with $4H^2+\kappa<0$.
	\end{enumerate}
\end{enumerate}
\end{theorem}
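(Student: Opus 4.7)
The plan is to close the circle $(iv) \Rightarrow (i) \Rightarrow (iii)$ and $(i) \Rightarrow (ii)$, and then to prove the main implications $(iii) \Rightarrow (iv)$ and $(ii) \Rightarrow (iv)$, so that all four assertions become equivalent. The easy directions are quickly handled. $(i) \Rightarrow (iii)$: an ambient isometry acting transitively on $\Sigma$ conjugates the shape operator to itself, so the principal curvatures are constant. $(i) \Rightarrow (ii)$: the same group action preserves each equidistant surface $\Sigma_t$, which is therefore itself extrinsically homogeneous and in particular has constant mean curvature. $(iv) \Rightarrow (i)$: each model admits an explicit two-dimensional group of ambient isometries acting transitively: in (a), vertical translations together with the one-parameter subgroup of $\text{Isom}(\mathbb{M}^2(\kappa))$ fixing the base curve of constant geodesic curvature (lifted via the Riemannian submersion); in (b), the group $\text{Isom}(\mathbb{M}^2(\kappa))$ acts transitively on each horizontal slice; and in (c), the parabolic helicoid $P_{H,\kappa,\tau}$ is by construction the orbit of a solvable subgroup built from a parabolic one-parameter subgroup on the base and a screw motion.

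For $(iii) \Rightarrow (iv)$, let $\Sigma$ have constant principal curvatures $k_1, k_2$. Denote by $\xi$ the vertical Killing field, $N$ a unit normal to $\Sigma$, and $\nu = \langle N, \xi\rangle$ the angle function, and decompose $\xi = T + \nu N$ along $\Sigma$, so that $|T|^2 = 1 - \nu^2$. The analysis is driven by the fundamental identity
\[
\nabla\nu = -(S + \tau J)T,
\]
where $J$ is the $\pi/2$-rotation on $T\Sigma$, together with the Codazzi equation of $\mathbb{E}(\kappa,\tau)$-surfaces, whose inhomogeneous term is proportional to $(\kappa - 4\tau^2)\nu T$. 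I would split the argument on the behaviour of $\nu$. If $\nu \equiv \pm 1$, then $T \equiv 0$, so $\Sigma$ is everywhere horizontal, and integrability of the horizontal distribution forces $\tau = 0$, giving case (b). If $\nu \equiv 0$, then $\xi$ is tangent to $\Sigma$, so $\Sigma$ is a vertical cylinder over a curve in $\mathbb{M}^2(\kappa)$ whose geodesic curvature must be constant by constancy of $k_1, k_2$, giving case (a).

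The main obstacle is the case in which $\nu$ is non-constant, where one must show that $\Sigma$ is a parabolic helicoid. The strategy is to combine the identity for $\nabla\nu$ and the Codazzi equation with the constancy of $k_1, k_2$ to produce an overdetermined system relating $\nu$, $|T|$, and the angle of $T$ in the principal frame; successive differentiation should reduce this to an ODE along the integral curves of $T$, whose solutions match precisely the profile equations of a parabolic helicoid, with the hypothesis $4H^2 + \kappa < 0$ emerging as the discriminant condition for the existence of parabolic trajectories on $\mathbb{M}^2(\kappa)$. Finally, $(ii) \Rightarrow (iv)$ should follow along parallel lines by applying the evolution formula for the mean curvature of the parallel surfaces $\Sigma_t$: the isoparametric assumption, combined with the explicit Ricci tensor of $\mathbb{E}(\kappa,\tau)$, forces $|S|^2$ to be constant along the normal flow, and under these low-dimensional, Killing-field-rich circumstances this upgrades to the constancy of the two principal curvatures themselves, reducing to $(iii)$.
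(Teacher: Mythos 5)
Your treatment of the easy implications (i)$\Rightarrow$(ii), (i)$\Rightarrow$(iii) and (iv)$\Rightarrow$(i) agrees with the paper and is fine. The two hard implications, however, are only announced as strategies, and the strategy for (iii)$\Rightarrow$(iv) contains a conceptual error. You split into $\nu\equiv\pm1$, $\nu\equiv0$, and ``$\nu$ non-constant'', claiming that the last case should integrate to a parabolic helicoid. But the parabolic helicoid $P_{H,\kappa,\tau}$ has \emph{constant} angle function $\nu^2=\frac{4H^2+\kappa}{\kappa-4\tau^2}$, so a surface with genuinely non-constant $\nu$ can never be an open piece of one; your case division also omits the case of constant $\nu$ with $0<\nu^2<1$, which is exactly where the answer lives. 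What the paper actually proves (Proposition~\ref{prop:cpc}) is that constancy of the principal curvatures \emph{forces} $\nu$ to be constant: comparing the Gauss equation with the curvature computed from the Christoffel symbols yields a quadratic relation in $\langle T,e_1\rangle$, $\langle T,e_2\rangle$ whose compatibility with $\|T\|^2=1-\nu^2$ and with $\langle\nabla\langle T,e_i\rangle,J\nabla\nu\rangle=0$ pins down $\nu^2=\frac{4H^2+\kappa}{\kappa-4\tau^2}$ and $\kappa_1\kappa_2+\tau^2=0$; one then concludes via $q=0$ and the classification of surfaces with vanishing Abresch--Rosenberg differential (Proposition~\ref{prop:q=0}), an ingredient entirely absent from your outline. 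Without it, ``successive differentiation should reduce this to an ODE whose solutions match the profile equations'' is a hope, not an argument.

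For (ii)$\Rightarrow$(iii) the gap is similar in kind. The trace of the Riccati equation only gives $h'(0)$ in terms of $\|A\|^2+\mathrm{Ric}(N,N)$, so constancy of $h$ and $h'$ yields that $\kappa_1^2+\kappa_2^2$ plus a function of $\nu^2$ is constant; this does not separate the principal curvatures from the angle function, and there is no general mechanism by which being ``Killing-field-rich'' upgrades it. The paper has to solve the Jacobi equation explicitly along the normal geodesics (equations \eqref{eq:ode}--\eqref{eq:b_ij}), form $f(r)=\frac{d}{dr}\det B(r)+2h(r)\det B(r)\equiv0$, and use the vanishing of $f'(0)$, $f''(0)$ \emph{and} $f'''(0)$ to express $a_{22}$, $a_{12}$, $a_{11}$ in terms of $h$ and its derivatives at $0$ and finally to exhibit a nontrivial polynomial with constant coefficients annihilating $\delta=(\kappa-4\tau^2)\nu^2-\kappa$; only then do $\nu$ and the $a_{ij}$ come out constant. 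Your proposal stops well short of this third-order computation, which is where the actual content of the implication lies.
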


The surfaces $P_{H,\kappa,\tau}$ in item (iv)-(c) will be described in Section~\ref{sec:preliminaries}. According to Theorem~\ref{thm:main}, each one of the three families of examples in item (iv) is, therefore, an orbit of a cohomogeneity one isometric action on $\mathbb{E}(\kappa,\tau)$. Hence, it follows that every isoparametric surface (or every surface with constant principal curvatures) in $\mathbb{E}(\kappa,\tau)$ is an open part of an embedded, complete, isoparametric surface which, together with its equidistant surfaces, determines a codimension one singular Riemannian foliation on $\mathbb{E}(\kappa,\tau)$, see~\cite{AB}. This foliation is nothing but the orbit foliation induced by the cohomogeneity one isometric action mentioned above and, hence, its regular leaves are all isoparametric. Moreover, the foliations associated with examples (iv)-(b-c) do not have singular leaves (and, indeed, are invariant under vertical translations), whereas the foliations associated with the examples in (iv)-(a) have exactly one singular leaf if $4H^2+\kappa>0$ and no singular~leaves~otherwise.

The proof of Theorem~\ref{thm:main} will be scattered across the next sections in the following way. Firstly, the implication (i)$\Rightarrow$(ii) holds true for any hypersurface of any Riemannian manifold, since the existence of a group of ambient isometries acting with cohomogeneity one determines a family of equidistant hypersurfaces, all of them with constant principal curvatures and, hence, constant mean curvature. The implication (ii)$\Rightarrow$(iii) will be discussed in Section~\ref{sec:isoparametricas}, and follows from applying Jacobi field theory to work out the mean curvature of parallel surfaces (see also~\cite[\S10.2.1-2]{BCO}). The implication (iii)$\Rightarrow$(iv) is the content of Proposition~\ref{prop:cpc} in Section~\ref{sec:cpc}. Finally, (iv)$\Rightarrow$(i) follows from the discussion of examples in Section~\ref{sec:preliminaries}. 

We would like to mention that another equivalent property to those in the statement of Theorem~\ref{thm:main} is the condition that $\Sigma$ has constant mean curvature and constant angle function (see~\cite[Theorem~2.2]{ER}, where completeness is assumed but not used in the proof). It is also interesting to point out that Theorem~\ref{thm:main} implies that the Daniel sister correspondence~\cite{Dan} preserves isoparametric surfaces and surfaces with constant principal curvatures in $\mathbb{E}(\kappa,\tau)$.

In the classification of surfaces with constant principal curvatures, we will encounter the condition that the Abresch--Rosenberg differential $Q$ of the surface vanishes (see the definition in Section~\ref{sec:preliminaries}). On the one hand, Abresch and Rosenberg~\cite{AR} classified those surfaces with $Q=0$ when $\tau=0$ (an independent more geometric proof was also given by Leite~\cite{Leite}). On the other hand, Espinar and Rosenberg~\cite{ER} proved that, for any value of $\tau$, complete surfaces with $Q=0$ must be invariant under a 1-parameter group of ambient isometries. However a general explicit classification has not been established yet. Here we will provide such a classification in Proposition~\ref{prop:q=0}, whose proof will rely on~\cite{AR} along with the Daniel correspondence and a combinatorial argument.  

The proofs of Theorem~\ref{thm:main} and Proposition~\ref{prop:q=0} are purely local, not assuming completeness of the surfaces, so they also apply to surfaces in quotients of $\mathbb{E}(\kappa,\tau)$. Moreover, Jacobi fields in $\mathbb{E}(\kappa,\tau)$-spaces will be explicitly computed in Section~\ref{sec:isoparametricas}, though the particular case of the Heisenberg space can also be deduced from~\cite{BTV}.

It would be very interesting to extend Theorem~\ref{thm:main} to the case of homogeneous 3-manifolds with isometry group of dimension $3$, which are isometric to Lie groups with left-invariant metrics (a classification of such spaces can be found in~\cite{MP}). We expect that the families of isoparametric surfaces and surfaces with constant principal curvatures in these less symmetric spaces  will contain very few elements. Some homogeneous examples are given in~\cite{MS} as totally geodesic integral surfaces of certain distributions in very special cases, but a general classification of isoparametric surfaces or surfaces with constant principal curvatures is still an open problem.

\section{Preliminaries}\label{sec:preliminaries}

Given $\kappa,\tau\in\mathbb{R}$, let $\mathbb{M}^2(\kappa)$ be the simply connected complete surface of constant curvature $\kappa$. The oriented simply connected $3$-manifold $\mathbb{E}(\kappa,\tau)$ is characterized by admitting a Riemannian submersion $\pi\colon\mathbb{E}(\kappa,\tau)\to\mathbb{M}^2(\kappa)$ whose fibers are the integral curves of a unitary Killing vector field $\xi$, with bundle curvature $\tau$, i.e., for any tangent vector $v$, the following equation holds true
\begin{equation}\label{eqn:tau}
\overline\nabla_v\xi=\tau v\wedge \xi,
\end{equation}
where $\wedge$ stands for the cross-product in $\mathbb{E}(\kappa,\tau)$, whose sign depends on the chosen orientation. Then, the $2$-parameter family $\mathbb{E}(\kappa,\tau)$ with $\kappa-4\tau^2\neq 0$, parameterizes all simply connected homogeneous $3$-manifolds with $4$-dimensional isometry group. We refer to~\cite{Dan,Man} for details. A standard model~\cite{Dan} for $\mathbb{E}(\kappa,\tau)$ is given by
\[\mathbb{E}(\kappa,\tau)=\left\{(x,y,z)\in\mathbb{R}^3:1+\tfrac{\kappa}{4}(x^2+y^2)>0\right\}\]
endowed with the Riemannian metric
\[\frac{\df x^2+\df y^2}{(1+\frac{\kappa}{4}(x^2+y^2))^2}+\left(\df z^2+\frac{\tau\, (x\,\df y-y\,\df x)}{1+\frac{\kappa}{4}(x^2+y^2)}\right)^2.\]
This model omits a whole vertical fiber whenever $\kappa>0$, but this does not concern our arguments, which are purely local. In this model, the projection $\pi$ is nothing but $(x,y,z)\mapsto(x,y)$, whereas the unit Killing vector field is given by $\xi=\partial_z$.

The curvature tensor of $\mathbb{E}(\kappa,\tau)$ will be used below and is given by the following formula (see~\cite[Proposition~3.1]{Dan}, where the sign convention is the opposite of ours):
\begin{equation}\label{eq:curvature}
\begin{aligned}
\overline{R}(X,Y)Z={}&(\kappa-3\tau^2)(\langle Y,Z\rangle X-\langle X,Z\rangle Y) 
\\
&-(\kappa-4\tau^2)\left(\langle Y,\xi\rangle\langle Z,\xi\rangle X+\langle Y,Z\rangle\langle X,\xi\rangle \xi\right.
\\ 
& \left.\phantom{-(\kappa-4\tau^2)(}-\langle X,Z\rangle\langle Y,\xi\rangle \xi -\langle X,\xi\rangle\langle Z,\xi\rangle Y\right).
\end{aligned}
\end{equation}

Let now $\Sigma$ be an immersed surface in $\mathbb{E}(\kappa,\tau)$ with constant mean curvature $H$ with respect to a (local) unit normal $N$ (an $H$-surface in the sequel). Throughout the text, $H$ will be the average of the principal curvatures of $\Sigma$. We will denote by $\overline\nabla$ and $\nabla$ the Levi-Civita connections of $\mathbb{E}(\kappa,\tau)$ and $\Sigma$, respectively, and let $Au=-\overline\nabla_uN$ define the shape operator of $\Sigma$ for any vector $u$ tangent to $\Sigma$. In this setting, the intrinsic curvature $K$ of $\Sigma$ is given by the Gauss equation
\begin{equation}\label{eqn:gauss}
K=\det(A)+\tau^2+(\kappa-4\tau^2)\nu^2.
\end{equation}

Abresch and Rosenberg~\cite{AR} (cf.~\cite{ER}) considered the holomorphic quadratic differential $Q$ defined as the $(2,0)$-part of the $2$-form
\[\omega(u,v)=2(H+i\tau)\langle Au,v\rangle-(\kappa-4\tau^2)\langle u,\xi\rangle\langle v,\xi\rangle.\]
The modulus of $Q$ yields a geometric function given by $q(p)=\frac{1}{4}|Q_p(u,u)|^2$, see~\cite{Man2}. The function $q\in C^\infty(\Sigma)$ does not depend upon the choice of the unitary vector $u\in T_pM$, $p\in M$, and can be expressed in terms of the angle function $\nu=\langle N,\xi\rangle$, its gradient, and $\det(A)$ as follows (see~\cite[Lemma~2.2]{ER}):
\begin{equation}\label{eqn:q}
\frac{q}{\kappa-4\tau^2}=\left(\frac{4H^2+\kappa}{\kappa-4\tau^2}-\nu^2\right)\left(H^2-\det(A)+\tfrac{1}{4}(\kappa-4\tau^2)(1-\nu^2)\right)-\|\nabla\nu\|^2.
\end{equation}

Let us now discuss some distinguished examples that will show up in our classification results. On the one hand we have the vertical cylinders, which are defined as preimages $\Sigma=\pi^{-1}(\Gamma)$ of curves $\Gamma\subset\mathbb{M}^2(\kappa)$. It is easy to check that $\pi^{-1}(\Gamma)$ is an $H$-surface if and only if $\Gamma$ has constant curvature $2H$, which corresponds to examples in item (iv)-(a) of Theorem~\ref{thm:main}. These surfaces are invariant under vertical translations, as well as under a 1-parameter group of isometries of $\mathbb{E}(\kappa,\tau)$ leaving a horizontal lift of $\Gamma$ invariant, so they are homogeneous surfaces. From~\eqref{eqn:q} and the fact that $\det(A)=-\tau^2$, one easily gets that $q=(4H^2+\kappa)^2$ for vertical cylinders.

On the other hand, we have three families $S_{H,\kappa,\tau}$, $C_{H,\kappa,\tau}$, and $P_{H,\kappa,\tau}$ of surfaces whose Abresch--Rosenberg differential vanishes identically (this is a straightforward computation using Equation~\eqref{eqn:q} along with their explicit parameterizations):

\begin{itemize}
	 \item \textbf{Rotationally invariant surfaces $S_{H,\kappa,\tau}$.} In the above model for $\mathbb{E}(\kappa,\tau)$, consider the rotationally invariant surface parameterized by 
	 \begin{equation}\label{eqn:param-S}
	 X(u,v)=\left(v\cos(u),v\sin(u),\int_0^v\frac{-4Hs\sqrt{1+\tau^2s^2}\,\df s}{(4+\kappa s^2)\sqrt{1-H^2s^2}}\right),
	 \end{equation}
	 where $u\in\mathbb{R}$, and $v\in\bigl[0,\min\bigl\{\frac{1}{H},\frac{2}{\sqrt{-\kappa}}\bigr\}\bigr)$ if $\kappa<0$ or $v\in\bigl[0,\frac{1}{H}\bigr)$ if $\kappa\geq 0$. If $4H^2+\kappa>0$, this surface is the upper half of an $H$-sphere; otherwise, it is a complete $H$-surface everywhere transversal to $\xi$, i.e., an entire $H$-graph. The complete extension of~\eqref{eqn:param-S} will be denoted by $S_{H,\kappa,\tau}$. 

	 \item \textbf{Screw-motion invariant surfaces $C_{H,\kappa,\tau}$.} Assume that $4H^2+\kappa<0$, and consider the surface in $\mathbb{E}(\kappa,\tau)$ parameterized by
	 \begin{equation}\label{eqn:param-C}
	 \qquad X(u,v)\!=\!\left(v\cos(u),v\sin(u),\frac{4\tau}{\kappa}u\!\pm\!\!\int_{\frac{4H}{|\kappa|}}^v\!\frac{16H\sqrt{16\tau^2+\kappa^2s^2}\,\df s}{\kappa s(4+\kappa s^2)\sqrt{\kappa^2s^2-16H^2}}\right)\!,
	 \end{equation}
	 where $u\in\mathbb{R}$, and $v\in\bigl[\frac{4H}{-\kappa},\frac{2}{\sqrt{-\kappa}}\bigr)$. The condition $4H^2+\kappa<0$ ensures that~\eqref{eqn:param-C} defines a complete $H$-surface $C_{H,\kappa,\tau}$. If $\tau=0$, then $C_{H,\kappa,\tau}$ becomes a rotationally invariant surface that resembles a catenoid; if $H=0$, then $C_{H,\kappa,\tau}$ becomes a minimal helicoid of pitch $\frac{4\tau}{\kappa}$. The intermediate surfaces look like the deformation of the catenoid into the helicoid in $\mathbb{R}^3$, and they are not embedded.

	 \item \textbf{Parabolic helicoids $P_{H,\kappa,\tau}$.} Assume that $4H^2+\kappa< 0$, and consider the halfspace model $\mathbb{E}(\kappa,\tau)=\{(x,y,z)\in\mathbb{R}^3:y>0\}$ endowed with the Riemannian metric
	 \[\frac{\df x^2+\df y^2}{-\kappa y^2}+\left(\df z-\frac{2\tau}{\kappa y}\df x\right)^2.\]
	In this model, $P_{H,\kappa,\tau}$ is the entire $H$-graph parameterized by
	 \begin{equation}\label{eqn:param-P}
	 X(u,v)=\left(u,v,a\log(v)\right),\qquad\text{with } a=\frac{2H\sqrt{-\kappa+4\tau^2}}{-\kappa\sqrt{-4H^2-\kappa}}.
	 \end{equation}
	Observe that $P_{H,\kappa,\tau}$ is invariant under the 1-parameter groups of isometries $(x,y,z)\mapsto (x+t,y,z)$ and $(x,y,z)\mapsto(e^tx,e^ty,z-at)$, so it is homogeneous. Its angle function is constant and satisfies $\nu^2=\frac{4H^2+\kappa}{\kappa-4\tau^2}$. The homogeneous surfaces $P_{H,\kappa,\tau}$ are precisely those in item (iv)-(c) of Theorem~\ref{thm:main}.

	Parabolic helicoids are $2$-dimensional subgroups for a certain Lie group structure of $\mathbb{E}(\kappa,\tau)$, see~\cite{MP}. Moreover, the limit of $P_{H,\kappa,\tau}$ as $4H^2+\kappa\to 0$ is a vertical $H$-cylinder with $4H^2+\kappa=0$, and hence with $q=0$. These surfaces have already been studied by Leite~\cite{Leite} and Verpoort~\cite{V}.

\end{itemize}

\begin{remark}\label{rmk:q=0}
The examples arising in item (iv)-(b) of Theorem~\ref{thm:main} are horizontal slices $\mathbb{M}^2(\kappa)\times\mathbb\{t_0\}\subset\mathbb{M}^2(\kappa)\times\mathbb{R}$, which are clearly homogeneous. These can be recovered as $S_{0,\kappa,0}$ if $\kappa>0$, or $S_{0,\kappa,0}=C_{0,\kappa,0}=P_{0,\kappa,0}$ if $\kappa<0$ (note that $S_{0,\kappa,\tau}$ is the so-called horizontal umbrella centered at the origin). Otherwise, no two of the surfaces $S_{H,\kappa,\tau}$, $C_{H,\kappa,\tau}$, and $P_{H,\kappa,\tau}$ are congruent. This follows from the fact that isometries of $\mathbb{E}(\kappa,\tau)$ preserve the square of the angle function if $\kappa-4\tau^2\neq 0$: the surface $S_{H,\kappa,\tau}$ has points with $\nu^2=1$ and $\nu=0$, the surface $C_{H,\kappa,\tau}$ has points with $\nu=0$ but no points with $\nu^2=1$, and the surface $P_{H,\kappa,\tau}$ does not have any points with $\nu=0$ or $\nu^2=1$. Also, $P_{H,\kappa,\tau}$ has constant angle function, whereas $S_{H,\kappa,\tau}$ and $C_{H,\kappa,\tau}$ do not.
\end{remark}

\begin{remark}
Equation~\eqref{eqn:param-C} gives rise to a complete $H$-surface also for $4H^2+\kappa>0$, but it turns out to be congruent to $S_{H,\kappa,\tau}$. This is because rotationally invariant surfaces in $\mathbb{S}^2\times\mathbb{R}$ or in the Berger spheres automatically become invariant by screw motions with respect to the antipodal axis. This also yields a geometric reason why the coefficient of $u$ in the third component of~\eqref{eqn:param-C} is precisely $\frac{4\tau}{\kappa}$. 
\end{remark}

Next we will classify explicitly those $H$-surfaces with $q=0$. Espinar and Rosenberg~\cite{ER} showed that they must be invariant under a $1$-parameter group of ambient isometries. Our argument is independent of~\cite{ER}, reasoning directly from Abresch and Rosenberg's classification in the case $\tau=0$ by means of the Daniel sister correspondence~\cite{Dan}. Observe that Abresch and Rosenberg's surfaces $D_H$ and $S_H$ are nothing but our $S_{H,\kappa,0}$ ($D_H$ corresponds to $4H^2+\kappa\leq 0$ whereas $S_H$ corresponds to $4H^2+\kappa >0$), their $C_H$ are our $C_{H,\kappa,0}$, and their $P_H$ are our $P_{H,\kappa,0}$ plus the vertical $H$-cylinder in $\mathbb{H}^2(\kappa)\times\mathbb{R}$ if $4H^2+\kappa=0$.

\begin{proposition}\label{prop:q=0}
Let $\Sigma$ be an immersed $H$-surface in $\mathbb{E}(\kappa,\tau)$, $\kappa-4\tau^2\neq 0$, with vanishing Abresch--Rosenberg differential. Then $\Sigma$ is congruent to an open subset of one of the following complete surfaces:
\begin{enumerate}[label=(\alph*)]
 \item the rotationally invariant surface $S_{H,\kappa,\tau}$,
 \item the screw-motion invariant surface $C_{H,\kappa,\tau}$ with $\kappa-4\tau^2<0$,
 \item the parabolic helicoid $P_{H,\kappa,\tau}$ with $4H^2+\kappa<0$,
 \item the vertical $H$-cylinder with $4H^2+\kappa=0$.
\end{enumerate}
\end{proposition}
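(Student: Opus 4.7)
My strategy is to reduce to the case $\tau=0$ via the Daniel sister correspondence~\cite{Dan}, apply the classification of~\cite{AR,Leite} in $\mathbb{M}^2(\tilde{\kappa})\times\mathbb{R}$, and then perform a combinatorial matching of the four families on each side by using invariants preserved by the correspondence.

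Since the statement is local, I may assume $\Sigma$ is simply connected. Set $\tilde{\kappa}=\kappa-4\tau^2$ and $\tilde H=\sqrt{H^2+\tau^2}$, and let $\tilde\Sigma\subset\mathbb{M}^2(\tilde{\kappa})\times\mathbb{R}=\mathbb{E}(\tilde{\kappa},0)$ be the Daniel sister of $\Sigma$, specified by the rotation angle $\theta$ with $\tilde H=e^{i\theta}(H+i\tau)$. The correspondence preserves the induced metric and the angle function; moreover, since it rotates the shape operator by $\theta$, a direct substitution in the defining expression for $\omega$ gives $\tilde Q=e^{2i\theta}Q$, so $\tilde q=q\equiv 0$. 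Applying~\cite{AR,Leite} to $\tilde\Sigma$ yields that it is an open subset of (a$'$)~$S_{\tilde H,\tilde{\kappa},0}$, (b$'$)~$C_{\tilde H,\tilde{\kappa},0}$, (c$'$)~$P_{\tilde H,\tilde{\kappa},0}$, or (d$'$)~a vertical $\tilde H$-cylinder in $\mathbb{H}^2(\tilde{\kappa})\times\mathbb{R}$, with the last three cases requiring $4\tilde H^2+\tilde{\kappa}\leq 0$. Since $4\tilde H^2+\tilde{\kappa}=4H^2+\kappa$, the existence conditions in~(b$'$)--(d$'$) translate directly into the ones stated for~(b)--(d).

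The combinatorial step matches the two lists. Each of $S_{H,\kappa,\tau}$, $C_{H,\kappa,\tau}$, $P_{H,\kappa,\tau}$, and the vertical $H$-cylinder with $4H^2+\kappa=0$ has $q\equiv 0$ (see Section~\ref{sec:preliminaries}), so the Daniel sister of each lies in one of the classes~(a$'$)--(d$'$). Since the sister of a simply connected CMC surface is unique up to ambient congruence, it is enough to identify which member of~(a)--(d) pairs with which member of~(a$'$)--(d$'$). Using that $\nu^2$ is preserved, I argue as in Remark~\ref{rmk:q=0}: vertical cylinders are the only members of either list with $\nu\equiv 0$, so (d)~$\leftrightarrow$~(d$'$); parabolic helicoids are the only ones with constant $\nu^2\in(0,1)$, both sides giving $\nu^2=(4H^2+\kappa)/(\kappa-4\tau^2)$, so (c)~$\leftrightarrow$~(c$'$); among the two remaining families, the $S$-surfaces are characterized by containing points with $\nu^2=1$ and the $C$-surfaces are not, forcing (a)~$\leftrightarrow$~(a$'$) and (b)~$\leftrightarrow$~(b$'$). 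Hence $\Sigma$ is congruent to an open subset of the candidate on the $\mathbb{E}(\kappa,\tau)$-side corresponding to the class of $\tilde\Sigma$.

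The main technical hurdle is the identity $\tilde Q=e^{2i\theta}Q$: this requires careful tracking of how the shape operator and the Killing direction transform under Daniel's formulas. Once this is established, the rest is bookkeeping with the invariants above, together with the uniqueness of the sister up to ambient congruence.
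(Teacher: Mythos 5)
Your overall strategy is the same as the paper's: pass to the sister surface in $\mathbb{E}(\kappa-4\tau^2,0)$, invoke the Abresch--Rosenberg classification there, and match the families on the two sides. The matching step you propose (via the preserved invariant $\nu^2$, distinguishing cylinders, parabolic helicoids, and the $S$- versus $C$-families exactly as in Remark~\ref{rmk:q=0}) is sound; the paper phrases it instead as a counting argument --- Abresch and Rosenberg produce one, two, or three congruence classes according to the sign of $4H^2+\kappa$, the surfaces in items (a)--(d) all satisfy $q=0$ and are pairwise non-congruent, and there are exactly the right number of them in each case --- but the content is the same.

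The genuine gap is the key transfer step, namely your claim that ``a direct substitution in the defining expression for $\omega$ gives $\tilde Q=e^{2i\theta}Q$''. This is not a direct substitution, and as stated the identity is doubtful: under Daniel's transformation rules the ingredients of $\omega$ acquire \emph{different} phases --- the factor $H+i\tau$ is multiplied by $e^{i\theta}$, the Hopf coefficient $\langle A\partial_z,\partial_z\rangle$ (which only sees the traceless part of $A$, rotated by $e^{\theta J}$) picks up a single factor $e^{\mp i\theta}$, while $\langle T,\partial_z\rangle^2$ picks up $e^{\mp 2i\theta}$ --- so the two summands of $Q(\partial_z,\partial_z)$ do not visibly rotate by a common factor, and you cannot read off $\tilde q=q$ this way without substantially more work. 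The paper sidesteps this entirely: by the Gauss equation~\eqref{eqn:gauss}, $\det A^*=\det A+\tau^2$ (since $K$, $\nu$ and $\kappa-4\tau^2$ are preserved), hence $H^{*2}-\det A^*=H^2-\det A$, and every other quantity appearing in~\eqref{eqn:q} --- namely $4H^2+\kappa$, $\kappa-4\tau^2$, $\nu$ and $\|\nabla\nu\|$ --- is manifestly preserved, whence $q^*=q=0$. You should replace your phase computation by this argument (or else prove the transformation law of $Q$ with full care; only its modulus matters). A minor additional point: \cite{AR} state their classification for complete surfaces, so you should note, as the paper does, that their arguments are local and show that any surface with $q=0$ is an open piece of one of the complete models.
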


\begin{proof}
Let $\Sigma^*$ be the sister immersed $H^*$-surface in $\mathbb{E}(\kappa-4\tau^2,0)$, where $H^*=\sqrt{H^2+\tau^2}$. Since the quantities $4H^2+\kappa$ and $\kappa-4\tau^2$, the angle function, and the intrinsic geometry are preserved by the correspondence, it follows from~\eqref{eqn:gauss} and~\eqref{eqn:q} that the surface $\Sigma^*$ also satisfies $q=0$ in $\mathbb{E}(\kappa-4\tau^2,0)$. We can assume that $\Sigma^*$ is complete since Abresch and Rosenberg do not use completeness in their arguments (see also~\cite{Leite}) showing that the surface is an open subset of a complete surface. Their classification~\cite[Theorem~3]{AR} produces (up to ambient isometries) one $H^*$-surface in $\mathbb{E}(\kappa-4\tau^2,0)$ if $4H^2+\kappa>0$, two of them if $4H^2+\kappa=0$, and three of them if $4H^2+\kappa<0$.

Via the correspondence, there must be the same number of $H$-surfaces in $\mathbb{E}(\kappa,\tau)$ with $q=0$ up to ambient isometries. It is straightforward to check that the $H$-surfaces in the above items (a)-(d) satisfy $q=0$, so they must be the only ones (see also Remark~\ref{rmk:q=0}).
\end{proof}

\begin{remark}
By the analysis of the angle function in Remark~\ref{rmk:q=0}, it is easy to show that the sister correspondence leaves each of the families $S_{H,\kappa,\tau}$, $C_{H,\kappa,\tau}$, and $P_{H,\kappa,\tau}$ invariant. The parameterizations given by~\eqref{eqn:param-S},~\eqref{eqn:param-C} and~\eqref{eqn:param-P} have been found among the invariant surfaces by imposing the condition $q=0$, inspired by~\cite{ER}.
\end{remark}

\section{Surfaces with constant principal curvatures}\label{sec:cpc}

Let $\Sigma$ be a surface with constant principal curvatures in $\mathbb{E}(\kappa,\tau)$, $\kappa-4\tau^2\neq 0$, and let $\{e_1,e_2\}$ be a local orthonormal frame diagonalizing its shape operator $A$ with respect to a (local) unit normal $N$, i.e., $Ae_i=\kappa_ie_i$ for some real constants $\kappa_1,\kappa_2\in\mathbb{R}$. Since $\{e_1,e_2\}$ is orthonormal, there exist smooth functions $p_1$, $p_2$, called the Christoffel symbols associated with this frame, such that
\begin{equation}\label{eqn:christoffel1}
\begin{aligned}
\nabla_{e_1}e_1&=p_1e_2,&\nabla_{e_1}e_2&=-p_1e_1,\\
\nabla_{e_2}e_1&=p_2e_2,&\nabla_{e_2}e_2&=-p_2e_1.
\end{aligned}\end{equation}
The curvature tensor of $\mathbb{E}(\kappa,\tau)$ applied to the triple $\{e_1,e_2,N\}$ can be computed easily by means of~\eqref{eqn:christoffel1} and the fact that $[e_1,e_2]=\nabla_{e_1}e_2-\nabla_{e_2}e_1$, as
\begin{equation}\label{eqn:curvature1}
 \overline{R}(e_1,e_2)N=\overline\nabla_{e_1}\overline\nabla_{e_2}N-\overline\nabla_{e_2}\overline\nabla_{e_1}N-\overline\nabla_{[e_1,e_2]}N=(\kappa_2-\kappa_1)(p_1e_1-p_2e_2).
\end{equation}
It can be also computed by means of~\eqref{eq:curvature}, so Equation~\eqref{eqn:curvature1} can be rewritten as
\begin{equation}\label{eqn:christoffel2}
\begin{aligned}
 (\kappa_1-\kappa_2)p_1=(\kappa-4\tau^2)\nu\langle T,e_2\rangle,\\
 (\kappa_1-\kappa_2)p_2=(\kappa-4\tau^2)\nu\langle T,e_1\rangle,
\end{aligned}
\end{equation}
where $\nu=\langle N,\xi\rangle$ denotes the angle function, and $T=\xi-\nu N$ is the tangent part of the unit Killing vector field $\xi$. Two formulas that will come in handy later are 
\begin{align}
 \nabla_vT&=\tau\nu Jv+\nu Av,\label{eqn:nablaT}\\
 \nabla\nu&=-AT+\tau JT,\label{eqn:nablaNu}
\end{align}
where $v$ is any tangent vector. Here $J$ denotes a $\frac{\pi}{2}$-rotation in the tangent plane, which can be defined in terms of the cross-product as $Jv=v\wedge N$. Moreover, we will assume that $\{e_1,e_2\}$ is such that $Je_1=e_2$ and $Je_2=-e_1$.

\begin{proposition}\label{prop:cpc}
Let $\Sigma$ be an immersed surface with constant principal curvatures in $\mathbb{E}(\kappa,\tau)$, $\kappa-4\tau^2\neq 0$. Then $\Sigma$ is an open subset of one of the following surfaces:
	\begin{enumerate}[label=(\alph*)]
	 \item a vertical cylinder over a complete curve of constant curvature in $\mathbb{M}^2(\kappa)$, 
	 \item a horizontal slice $\mathbb{M}^2(\kappa)\times\{t_0\}$ with $\tau=0$,
	 \item the parabolic helicoid $P_{H,\kappa,\tau}$ with $4H^2+\kappa<0$.	 
	\end{enumerate}
\end{proposition}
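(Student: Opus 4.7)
First, I would handle the umbilical case $\kappa_1=\kappa_2$ separately. Since then $\nabla A=0$, the Codazzi equation reduces to $\overline{R}(X,Y)N\perp\Sigma$ for all tangent $X,Y$, and a direct evaluation using \eqref{eq:curvature} rewrites this as $(\kappa-4\tau^2)\nu(\langle Y,T\rangle X-\langle X,T\rangle Y)=0$. Because $\kappa-4\tau^2\neq 0$, this forces $\nu T\equiv 0$, so either $\nu\equiv 0$ (making $\xi$ tangent and $\Sigma$ a vertical cylinder, case~(a)) or $T\equiv 0$; in the latter situation \eqref{eqn:tau} applied to tangent vectors forces $\tau=0$ and $\Sigma$ is a horizontal slice, case~(b).

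For the generic case $\kappa_1\neq\kappa_2$ I would use the principal frame $\{e_1,e_2\}$ set up above. Solving \eqref{eqn:christoffel2} explicitly yields $p_1=c\nu t_2$ and $p_2=c\nu t_1$, with $c=(\kappa-4\tau^2)/(\kappa_1-\kappa_2)$ and $t_i=\langle T,e_i\rangle$. Plugging these into \eqref{eqn:nablaT} and \eqref{eqn:nablaNu} gives explicit polynomial expressions for every first derivative $e_i(\nu)$, $e_i(t_j)$ in terms of $(\nu,t_1,t_2)$ with constant coefficients. Finally, equating the Gauss equation \eqref{eqn:gauss} with the intrinsic expression $K=e_2(p_1)-e_1(p_2)-p_1^2-p_2^2$ produces one algebraic identity $\Phi(\nu,t_1,t_2)=0$ that must hold pointwise on $\Sigma$.

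The central step, and the main obstacle, is to upgrade this identity into the rigidity $\nabla\nu\equiv 0$. I would differentiate $\Phi=0$ along $e_1$ and $e_2$, substitute the first-order formulas, and then take a specific linear combination of the two resulting polynomial identities (multiply one by $t_2$, the other by $t_1$, and add); both the linear and the cubic parts then factor, yielding an identity of the form $F(\nu)\cdot P(t_1,t_2)\equiv 0$ on $\{\nu\neq 0\}$, where $F(\nu)=3(\kappa_1-\kappa_2)+c(3-5\nu^2)$ and $P(t_1,t_2)=\tau(t_2^2-t_1^2)+2Ht_1t_2$. Since $\Sigma$ is CMC and therefore real-analytic, on each connected component either $F\equiv 0$ or $P\equiv 0$. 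The first conclusion gives $\nu^2$ constant immediately; in the second case, substituting $P\equiv 0$ back into the formulas for $e_i(\nu)$ forces the angle of $T$ in the principal frame to satisfy $\tan\theta_0=-\kappa_1/\tau$ and $\det A=-\tau^2$, which in turn yields $\nabla\nu=0$. Either way $\nu$ is constant on $\Sigma$.

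Once $\nu$ is constant the classification is immediate. The case $\nu\equiv 0$ makes $\xi$ tangent and $\Sigma$ invariant under vertical translations, so $\Sigma=\pi^{-1}(\Gamma)$ and the constant principal curvature condition forces $\Gamma$ to have constant geodesic curvature, giving case~(a); the case $\nu^2\equiv 1$ is incompatible with $\kappa_1\neq\kappa_2$, since \eqref{eqn:tau} would then impose $A\equiv 0$, and is already covered in case~(b). For $0<\nu^2<1$, substituting $\nabla\nu=0$ into \eqref{eqn:nablaNu} yields $AT=\tau JT$, which pins down $\det A=-\tau^2$ and, via the identity $\Phi=0$, $\nu^2=(4H^2+\kappa)/(\kappa-4\tau^2)$; this forces $4H^2+\kappa<0$. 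The fundamental theorem of surfaces in $\mathbb{E}(\kappa,\tau)$ then identifies $\Sigma$ with an open subset of the parabolic helicoid $P_{H,\kappa,\tau}$, which is case~(c).
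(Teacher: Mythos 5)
Your umbilical case and the reduction to the principal frame coincide with the paper's, and your central factorization is in fact correct: writing $t_i=\langle T,e_i\rangle$ and using \eqref{eqn:nablaNu}, \eqref{lemma:cpc:eqn7} and $t_1^2+t_2^2=1-\nu^2$, the combination $t_2\,e_1(\Phi)+t_1\,e_2(\Phi)$ does reduce to $2\nu\bigl(3(\kappa_1-\kappa_2)+\sigma(3-5\nu^2)\bigr)\bigl(\tau(t_2^2-t_1^2)+2Ht_1t_2\bigr)$, and the real-analyticity dichotomy is legitimate. This is a genuinely different route from the paper, which instead observes that \eqref{lemma:cpc:eqn5} together with $t_1^2+t_2^2=1-\nu^2$ is a quadratic system with finitely many solutions, so that $t_1,t_2$ are functions of $\nu$, and then exploits $\langle\nabla t_i,J\nabla\nu\rangle=0$ to reach \eqref{lemma:cpc:eqn10} and the constancy of $\nu$ in one stroke; the paper also closes the argument via $q=0$ and Proposition~\ref{prop:q=0} rather than the fundamental theorem of surfaces, which spares the verification that all the data $(\mathrm{I},A,T,\nu)$ match those of $P_{H,\kappa,\tau}$.

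The genuine gap is in your $P\equiv 0$ branch, precisely where the paper inserts a separate case. When $\tau=0$ and $H=0$ (equivalently $\kappa_1=-\kappa_2$, which is compatible with non-umbilicity), the polynomial $P(t_1,t_2)=\tau(t_2^2-t_1^2)+2Ht_1t_2$ vanishes identically, so your factored identity carries no information at all; moreover your claimed consequences are then meaningless or contradictory ($\tan\theta_0=-\kappa_1/\tau$ is undefined for $\tau=0$, and $\det A=-\tau^2=0$ would force $\kappa_1=\kappa_2=0$). This case must be excluded by a different argument: the paper notes that for $\kappa_1=-\kappa_2$, $\tau=0$ the left-hand side of \eqref{lemma:cpc:eqn5} collapses to $\kappa_1(1-\nu^2)$, turning the identity into a nontrivial degree-$4$ polynomial in $\nu$ with leading coefficient $2\sigma\neq 0$, whence $\nu$ is constant and one is pushed back to cylinders or slices. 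Even outside this degenerate case, your step ``substituting $P\equiv 0$ forces $\tan\theta_0=-\kappa_1/\tau$ and $\det A=-\tau^2$'' is asserted rather than derived: what $P\equiv 0$ actually yields (when $P$ is not the zero polynomial) is that the direction of $T$ in the principal frame is locally constant, and one must still differentiate that constancy, via \eqref{lemma:cpc:eqn7}, to produce a nontrivial polynomial relation in $\nu$; the relations you state are equivalent to $\nabla\nu=0$ itself, so as written the step risks circularity. With the degenerate case treated separately and the $P\equiv 0$ branch argued through the constancy of the direction of $T$, your approach does close.
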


\begin{proof}
In the totally umbilical case $\kappa_1=\kappa_2$, it follows from~\eqref{eqn:christoffel2} that either $\nu=0$ or $T=0$. Since the only totally umbilical surfaces with $\nu=0$ are the vertical cylinders over geodesics in product spaces, and the only surfaces with $T=0$ are horizontal slices, we get that $\tau=0$ and $\Sigma$ is totally geodesic either way. This is essentially the argument in the classification of totally umbilical surfaces of Souam and Toubiana~\cite{ST}. 

Therefore, we will assume in the sequel that $\kappa_1\neq\kappa_2$, and write~\eqref{eqn:christoffel2} as $p_1=\sigma\nu\langle T,e_2\rangle$ and $p_2=\sigma\nu\langle T,e_1\rangle$, where
\[\sigma=\frac{\kappa-4\tau^2}{\kappa_1-\kappa_2}.\]
Since the set of points where $\nu=0$ or $\nu^2=1$ has empty interior unless $\Sigma$ is a vertical cylinder or a totally geodesic slice with $\tau=0$, and our computation is local, we will also assume that both $\nu$ and $T$ do not vanish.

The Gaussian curvature of $\Sigma$ can be computed from~\eqref{eqn:christoffel1} as
\begin{equation}\label{lemma:cpc:eqn1}
K=\langle\nabla_{e_1}\nabla_{e_2}e_2-\nabla_{e_2}\nabla_{e_1}e_2-\nabla_{[e_1,e_2]}e_2,e_1\rangle=e_2(p_1)-e_1(p_2)-p_1^2-p_2^2.
\end{equation}

On the one hand, we can work out the derivatives of $\sigma^{-1}p_1=\nu\langle T,e_2\rangle$ and $\sigma^{-1}p_2=\nu\langle T,e_1\rangle$ by means of~\eqref{eqn:christoffel1},~\eqref{eqn:christoffel2},~\eqref{eqn:nablaT}, and~\eqref{eqn:nablaNu}, giving rise to
\begin{equation}\label{lemma:cpc:eqn3}
\begin{aligned}
 \sigma^{-1}e_2(p_1)&=\langle e_2,\nabla\nu\rangle\langle T,e_2\rangle+\nu\langle\nabla_{e_2}T,e_2\rangle+\nu\langle T,\nabla_{e_2}e_2\rangle\\
 &=\kappa_2\nu^2-\sigma\nu^2\langle T,e_1\rangle^2+\tau\langle T,e_1\rangle\langle T,e_2\rangle-\kappa_2\langle T,e_2\rangle^2,\\
 \sigma^{-1}e_1(p_2)&=\langle e_1,\nabla\nu\rangle\langle T,e_1\rangle+\nu\langle\nabla_{e_1}T,e_1\rangle+\nu\langle T,\nabla_{e_1}e_1\rangle\\
 &=\kappa_1\nu^2-\kappa_1\langle T,e_1\rangle^2-\tau\langle T,e_1\rangle\langle T,e_2\rangle+\sigma\nu^2\langle T,e_2\rangle^2.
\end{aligned}\end{equation}
On the other hand, using the identity $\|T\|^2=\|\xi-\nu N\|^2=1-\nu^2$, we reach
\begin{equation}\label{lemma:cpc:eqn4}
p_1^2+p_2^2=\sigma^2\nu^2(\langle T,e_1\rangle^2+\langle T,e_2\rangle^2)=\sigma^2\nu^2\|T\|^2=\sigma^2\nu^2(1-\nu^2).
\end{equation}
Therefore, plugging~\eqref{eqn:gauss},~\eqref{lemma:cpc:eqn3}, and~\eqref{lemma:cpc:eqn4} into~\eqref{lemma:cpc:eqn1}, we get the relation
\begin{equation}\label{lemma:cpc:eqn5}
\begin{aligned}
\kappa_1\langle T,e_1\rangle^2+2\tau\langle T,e_1\rangle\langle T,e_2\rangle-\kappa_2\langle T,e_2\rangle^2&=\sigma^{-1}(\kappa_1\kappa_2+\tau^2)\\
&\quad+2\nu^2(\sigma(1-\nu^2)+\kappa_1-\kappa_2).
\end{aligned}
\end{equation}
If $\kappa_1=-\kappa_2$ and $\tau=0$, the left-hand side of~\eqref{lemma:cpc:eqn5} is equal to $\kappa_1(\langle T,e_1\rangle^2+\langle T,e_2\rangle^2)=\kappa_1(1-\nu^2)$, and~\eqref{lemma:cpc:eqn5} becomes a polynomial of degree $4$ in $\nu$ with constant coefficients and leading coefficient $2\sigma\neq 0$. Thus, we deduce that $\nu$ must be constant, so~\eqref{eqn:christoffel2} and~\eqref{eqn:nablaNu} yield $0=-\sigma\nu\nabla\nu=p_2\kappa_1e_1+p_1\kappa_2e_2$. Hence, either $\kappa_1=-\kappa_2=0$ or $p_1=p_2=0$, which, by~\eqref{eqn:christoffel2}, leads to vertical cylinders or totally geodesic surfaces, which have been already ruled out.

Therefore, we will assume that $\kappa_1\neq-\kappa_2$ or $\tau\neq0$. Then~\eqref{lemma:cpc:eqn5} together with $\langle T,e_1\rangle^2+\langle T,e_2\rangle^2=1-\nu^2$ form a system of two quadratic equations with unknowns $\langle T,e_1\rangle$ and $\langle T,e_2\rangle$. Since $\kappa_1\neq-\kappa_2$ or $\tau\neq0$, this system has finitely many solutions, so both $\langle T,e_1\rangle$ and $\langle T,e_2\rangle$ must be functions of $\nu$. In particular, the identities $\langle\nabla\langle T,e_i\rangle,J\nabla\nu\rangle=0$, $i\in\{1,2\}$, hold. On the one hand, from~\eqref{eqn:nablaNu} we infer that
\begin{equation}\label{lemma:cpc:eqn6}
J\nabla\nu=-JAT-\tau T=(\kappa_2 \langle T,e_2\rangle-\tau \langle T,e_1\rangle)e_1-(\kappa_1\langle T,e_1\rangle+\tau \langle T,e_2\rangle)e_2.
\end{equation}
On the other hand, using~\eqref{eqn:christoffel1},~\eqref{eqn:christoffel2}, and~\eqref{eqn:nablaT}, it is easy to compute
\begin{equation}\label{lemma:cpc:eqn7}
\begin{aligned}
\nabla\langle T,e_1\rangle&=(\langle\nabla_{e_1}T,e_1\rangle+\langle T,\nabla_{e_1}e_1\rangle)e_1+(\langle\nabla_{e_2}T,e_1\rangle+\langle T,\nabla_{e_2}e_1\rangle)e_2\\
&=(\kappa_1+\sigma\langle T,e_2\rangle^2)\nu e_1+(-\tau+\sigma\langle T,e_1\rangle\langle T,e_2\rangle)\nu e_2,\\
\nabla\langle T,e_2\rangle&=(\langle\nabla_{e_1}T,e_2\rangle+\langle T,\nabla_{e_1}e_2\rangle)e_1+(\langle\nabla_{e_2}T,e_2\rangle+\langle T,\nabla_{e_2}e_2\rangle)e_2\\
&=(\tau-\sigma\langle T,e_1\rangle\langle T,e_2\rangle)\nu e_1+(\kappa_2-\sigma\langle T,e_1\rangle^2)\nu e_2.
\end{aligned}
\end{equation}
In view of~\eqref{lemma:cpc:eqn6} and~\eqref{lemma:cpc:eqn7}, the equations $\langle\nabla\langle T,e_i\rangle,J\nabla\nu\rangle=0$, $i\in\{1,2\}$, can be written, respectively, as
\begin{equation}\label{lemma:cpc:eqn8}
\begin{aligned}
0&=\langle T,e_2\rangle(\kappa_1\kappa_2+\tau^2-\sigma(\kappa_1\langle T,e_1\rangle^2+2\tau\langle T,e_1\rangle\langle T,e_2\rangle-\kappa_2\langle T,e_2\rangle^2)), \\
0&=\langle T,e_1\rangle(\kappa_1\kappa_2+\tau^2-\sigma(\kappa_1\langle T,e_1\rangle^2+2\tau\langle T,e_1\rangle\langle T,e_2\rangle-\kappa_2\langle T,e_2\rangle^2)).
\end{aligned}
\end{equation}
Since we are assuming that $T\neq 0$, we deduce that
\[\kappa_1\langle T,e_1\rangle^2+2\tau\langle T,e_1\rangle\langle T,e_2\rangle-\kappa_2\langle T,e_2\rangle^2=\sigma^{-1}(\kappa_1\kappa_2+\tau^2),\]
so that~\eqref{lemma:cpc:eqn5}, together with the fact that $\nu\neq 0$, leads to the simplified expression
\begin{equation}\label{lemma:cpc:eqn10}
\sigma(1-\nu^2)+\kappa_1-\kappa_2=0.
\end{equation}
In particular, it follows that $\nu$ is constant, so~\eqref{eqn:nablaNu} yields 
\begin{equation}\label{lemma:cpc:eqn9}
\begin{aligned}
0&=\langle\nabla\nu,e_1\rangle=-\kappa_1\langle T,e_1\rangle-\tau\langle T,e_2\rangle,\\
0&=\langle\nabla\nu,e_2\rangle=\tau\langle T,e_1\rangle-\kappa_2\langle T,e_2\rangle.
\end{aligned}
\end{equation}
Considering~\eqref{lemma:cpc:eqn9} as a linear system with unknowns $\langle T,e_1\rangle$ and $\langle T,e_2\rangle$, it follows from $T\neq 0$ that its determinant $\kappa_1\kappa_2+\tau^2$ must vanish. Hence~\eqref{lemma:cpc:eqn10} gives
\[\nu^2=1+\sigma^{-1}(\kappa_1-\kappa_2)=1+\frac{(\kappa_1-\kappa_2)^2}{\kappa-4\tau^2}=1+\frac{4H^2+4\tau^2}{\kappa-4\tau^2}=\frac{4H^2+\kappa}{\kappa-4\tau^2},\]
and~\eqref{eqn:q} implies that $q=0$. Since the surfaces $S_{H,\kappa,\tau}$ and $C_{H,\kappa,\tau}$ do not have constant angle function, and vertical $H$-cylinders have been discarded previously, we get from Proposition~\ref{prop:q=0} that $\Sigma$ must be an open subset of $P_{H,\kappa,\tau}$.
\end{proof}

\section{Isoparametric surfaces}\label{sec:isoparametricas}

In this section we prove that any isoparametric surface in $\mathbb{E}(\kappa,\tau)$ has constant principal curvatures and constant angle function. For this purpose we will use Jacobi field theory to analyze the extrinsic geometry of  equidistant surfaces to a given one. We refer to~\cite[\S10.2.1-2]{BCO} for more details on this method.

Let $\Sigma$ be an isoparametric surface in $\mathbb{E}(\kappa,\tau)$ with (locally defined) unit normal $N$. We can restrict to an open subset where the angle function satisfies $\nu^2\neq 1$, since otherwise $\Sigma$ would be an open part of a horizontal slice with $\tau=0$, which is a homogeneous surface and, hence, has constant principal curvatures.
	
For each $r\in \mathbb{R}$ we define the map $\Phi^r\colon \Sigma\to \mathbb{E}(\kappa,\tau)$, $p\mapsto\exp_p(rN_p)$, where $\exp_p$ is the Riemannian exponential map of $\mathbb{E}(\kappa,\tau)$ at $p$. Hence, $\Phi^r(\Sigma)$ is obtained by moving $\Sigma$ a distance $r$ in the normal direction. For each $p\in\Sigma$, let us denote by $\gamma_p$ the geodesic in $\mathbb{E}(\kappa,\tau)$ with initial conditions $\gamma_p(0)=p$ and $\gamma_p'(0)=N_p$. Thus, locally, there exists $\varepsilon>0$ such that, if $r\in(-\varepsilon,\varepsilon)$, then $\Sigma^r=\Phi^r(\Sigma)$ is an embedded equidistant surface to $\Sigma$, and $\gamma_p'(r)$ is a normal vector to $\Sigma^r$. Hence, we restrict $\Sigma$ if necessary so that  $\mathcal{U}=\cup_{r\in(-\varepsilon,\varepsilon)}\Sigma^r$ is an open subset of $\mathbb{E}(\kappa,\tau)$ foliated by the equidistant surfaces $\Sigma^r$. We extend the unit normal $N$ to $\mathcal{U}$ by $N_{\gamma_p(r)}=\gamma_p(r)$, for $p\in \Sigma$, $r\in(-\varepsilon,\varepsilon)$. We also extend the angle function $\nu=\langle N,\xi\rangle$ to $\mathcal{U}$, and the complex structure $J$ to each surface $\Sigma^r\subset\mathcal{U}$ as $Jv=v\wedge N$ (for any vector $v\in T\mathcal{U}$ tangent to any of the $\Sigma^r$). Since $N$ is a geodesic vector field and $\xi$ is Killing, it follows that $\nu$ is constant along $\gamma_p$, for all $p\in\Sigma$. 

Observe that $\nu^2\neq 1$ at each point of $\mathcal{U}$, so we can define 
\begin{equation}\label{eqn:U1U2}
U_1=\frac{\xi-\nu N}{\sqrt{1-\nu^2}} \qquad \text{and}\qquad U_2=JU_1,
\end{equation}
and $\{U_1,U_2,N\}$ is an orthonormal frame on $\mathcal{U}$, where $U_1$, $U_2$ are tangent to the surfaces $\Sigma^r$. The following derivatives will be useful in the computations below. 

\begin{lemma}
We have that
\begin{equation}\label{eq:nabla_u_i}
\overline{\nabla}_{N} U_1=\tau U_2\qquad \text{and}\qquad \overline{\nabla}_N U_2=-\tau U_1.
\end{equation}
\end{lemma}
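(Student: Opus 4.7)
The plan is to compute $\overline{\nabla}_N U_1$ directly from the definition of $U_1$, using three basic facts: (1) $N$ is a geodesic vector field, so $\overline{\nabla}_N N = 0$; (2) the angle function $\nu$ is constant along the integral curves of $N$; and (3) the identity $\overline{\nabla}_v \xi = \tau\, v \wedge \xi$ from~\eqref{eqn:tau}. Fact (1) is immediate from the geodesic extension of $N$. Fact (2) follows from $N(\nu) = \langle \overline{\nabla}_N N,\xi\rangle + \langle N,\overline{\nabla}_N \xi\rangle = \tau\langle N, N\wedge \xi\rangle = 0$, using the vanishing of the triple product with a repeated entry. (This also provides a quick alternative to the author's remark that $\nu$ is constant along $\gamma_p$.)

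For the first equality, I would write $\xi = \nu N + \sqrt{1-\nu^2}\, U_1$ and differentiate $U_1 = (\xi - \nu N)/\sqrt{1-\nu^2}$ along $N$. Because $\nu$ and hence $\sqrt{1-\nu^2}$ are constant along $N$, and because $\overline{\nabla}_N N = 0$, only the $\xi$-term survives, giving
\[
\overline{\nabla}_N U_1 \;=\; \frac{1}{\sqrt{1-\nu^2}}\,\overline{\nabla}_N \xi \;=\; \frac{\tau}{\sqrt{1-\nu^2}}\, N \wedge \xi \;=\; \tau\, N \wedge U_1,
\]
where in the last step I substitute $\xi = \nu N + \sqrt{1-\nu^2}\,U_1$ and use $N\wedge N = 0$. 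One then invokes the paper's convention $Jv = v\wedge N$ together with $U_2 = JU_1$ (and $J^2 = -\mathrm{Id}$, which in 3D is the identity $(v\wedge N)\wedge N = -v$) to identify $N\wedge U_1$ with $U_2$ up to the orientation-dependent sign of $\wedge$, giving $\overline{\nabla}_N U_1 = \tau U_2$.

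The second formula $\overline{\nabla}_N U_2 = -\tau U_1$ then follows cheaply from the first by orthonormality of $\{U_1,U_2,N\}$ along the flow of $N$. Indeed, differentiating $\langle U_2,U_2\rangle = 1$ shows $\overline{\nabla}_N U_2 \perp U_2$; differentiating $\langle U_2,N\rangle = 0$ together with $\overline{\nabla}_N N = 0$ shows $\overline{\nabla}_N U_2 \perp N$; and differentiating $\langle U_1,U_2\rangle = 0$ yields $\langle \overline{\nabla}_N U_2, U_1\rangle = -\langle U_2,\overline{\nabla}_N U_1\rangle = -\tau$. Hence $\overline{\nabla}_N U_2 = -\tau U_1$. (Equivalently, one may redo the direct computation starting from $U_2 = U_1 \wedge N$ and using $\overline{\nabla}_N U_1 = \tau U_2$ and $\overline{\nabla}_N N = 0$.)

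There is no real obstacle here: the statement is a calculation in an adapted orthonormal frame adapted to an integrable normal direction. The only subtle point is tracking the sign stemming from the orientation convention on $\wedge$, which must be fixed consistently with the convention $Jv = v\wedge N$ and $Je_1 = e_2$ already adopted in Section~\ref{sec:cpc}. Once this sign is fixed, everything reduces to the three facts listed in the first paragraph.
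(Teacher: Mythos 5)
Your proof is correct and follows essentially the same route as the paper: differentiate $U_1=(\xi-\nu N)/\sqrt{1-\nu^2}$ along $N$ using $\overline{\nabla}_NN=0$, the constancy of $\nu$ along $N$, and $\overline{\nabla}_N\xi=\tau N\wedge\xi$, then identify $\tau N\wedge U_1$ with $\tau U_2$ via the orientation convention for $\wedge$ (the same sign bookkeeping the paper performs). The only difference is cosmetic: you obtain $\overline{\nabla}_NU_2=-\tau U_1$ by differentiating the orthonormality relations of $\{U_1,U_2,N\}$, whereas the paper applies the product rule to $U_2=U_1\wedge N$ --- an alternative you also note.
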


\begin{proof}
From~\eqref{eqn:U1U2} and~\eqref{eqn:tau}, along with the fact that $\nu$ is constant in the direction of $N$, as well as the fact that $\overline\nabla_NN=0$ ($N$ is a geodesic vector field), we get that
\[\overline\nabla_NU_1=\overline\nabla_N\frac{\xi-\nu N}{\sqrt{1-\nu^2}}=\frac{\overline\nabla_N\xi}{\sqrt{1-\nu^2}}=\frac{\tau N\wedge\xi}{\sqrt{1-\nu^2}}=\tau N\wedge\frac{\xi-\nu N}{\sqrt{1-\nu^2}}=\tau U_2.\]
The second equation in~\eqref{eq:nabla_u_i} follows from the first one and the product rule 
\[\overline\nabla_N(U_1\wedge N)=\overline\nabla_NU_1\wedge N+U_1\wedge\overline\nabla_NN=\tau U_2\wedge N=-\tau U_1.\qedhere\]
\end{proof}

For each normal geodesic $\gamma_p$ in $\mathcal{U}$ we are interested in calculating certain Jacobi fields along $\gamma_p$. Recall that a Jacobi field along $\gamma_p$ is a vector field $\zeta$ along $\gamma_p$ satisfying the Jacobi equation $\zeta''+\overline{R}(\zeta,\gamma_p')\gamma_p'=0$, where $'$ denotes $\overline{\nabla}$-covariant derivative along $\gamma_p$. For each $j\in\{1,2\}$, let us consider the Jacobi field $\zeta_j$ along $\gamma_p$ with initial conditions 
\begin{equation}\label{eq:in_cond}
\zeta_j(0)=U_j(p) \qquad \text{and}\qquad \zeta_j'(0)=-AU_j(p),
\end{equation}
where $A$ is the shape operator of~$\Sigma$. Since these initial conditions are orthogonal to $\gamma_p'(0)$, the Jacobi field $\zeta_j$ is also orthogonal to $\gamma_p'$ and, hence, can be written as $\zeta_j=b_{1j}U_1+b_{2j}U_2$, where $b_{ij}$ are certain smooth functions on $(-\varepsilon,\varepsilon)$. Using the relations~\eqref{eq:nabla_u_i} and the fact that $\gamma_p'=N$ along $\gamma_p$, we have
\begin{align}\label{eq:zeta''}\nonumber
\zeta_j''&=\overline{\nabla}_{\gamma_p'}\overline{\nabla}_{\gamma_p'}\zeta_j=\overline{\nabla}_{\gamma_p'}\bigl((b_{1j}'-\tau b_{2j})U_1+(b_{2j}'+\tau b_{1j})U_2\bigr)
\\
&=(b_{1j}''-2\tau b_{2j}'-\tau^2 b_{1j})U_1+(b_{2j}''+2\tau b_{1j}'-\tau^2 b_{2j})U_2.
\end{align}
The expression of $\overline{R}$ given by~\eqref{eq:curvature} lets us calculate
\begin{align}\label{eq:R_jacobi}\nonumber
\overline{R}(\zeta_j,\gamma_p')\gamma_p'&=(\kappa-3\tau^2)\zeta_j-(\kappa-4\tau^2)\bigl(\nu^2\zeta_j+\langle \zeta_j,\xi\rangle\xi-\langle \zeta_j,\xi\rangle \nu N\bigr)
\\\nonumber
&=(\kappa-3\tau^2)\zeta_j-(\kappa-4\tau^2)(\nu^2\zeta_j+b_{1j}(1-\nu^2)U_1)
\\
&= \tau^2 b_{1j} U_1+\bigl((\kappa-4\tau^2)(1-\nu^2)+\tau^2\bigr)b_{2j} U_2.
\end{align}
Therefore, the $\zeta_j$ are Jacobi fields provided that the $b_{ij}$ satisfy the following homogeneous linear system of ordinary differential equations with constant coefficients (recall that $\nu$ is constant along $\gamma_p$)
\begin{equation}\label{eq:ode}
b_{1j}''-2\tau b_{2j}'=0, \qquad b_{2j}''+2\tau b_{1j}'+(\kappa-4\tau^2)(1-\nu^2)b_{2j}=0.
\end{equation}
Let the shape operator of $\Sigma$ be determined by the relations $AU_i=a_{i1}U_1+a_{i2}U_2$, $i\in\{1,2\}$, for certain smooth functions $a_{ij}$ with $a_{21}=a_{12}$. Then, taking into account that $\zeta_j'=\overline{\nabla}_{N}\zeta_j$ and \eqref{eq:nabla_u_i}, the initial conditions~\eqref{eq:in_cond} of $\zeta_j$ are equivalent to the relations
\begin{equation}\label{eq:initial_conditions}
\begin{gathered}
b_{11}(0)=b_{22}(0)=1, \quad b_{12}(0)=b_{21}(0)=0,
\\
\quad b_{11}'(0)=-a_{11}, \quad b_{22}'(0)=-a_{22},\quad b_{12}'(0)=\tau-a_{12}, \quad b_{21}'(0)=-\tau-a_{12}.
\end{gathered}
\end{equation}
For convenience, we define a function $\delta$ on $\Sigma$ by 
\begin{equation}\label{eq:delta}
\delta=(\kappa-4\tau^2)\nu^2-\kappa,
\end{equation}
In the sequel we will assume that $\delta\neq 0$ on $\Sigma$ by restricting to an open subset if necessary. Observe that, if $\delta=0$ on an open subset of $\Sigma$, then $\nu$ is constant on this subset, so it follows from the arguments below~\eqref{lemma:cpc:eqn9} (also from~\cite{ER}) that $\Sigma$ is one of the examples in item (iv) of Theorem~\ref{thm:main}, so it has constant principal curvatures. 

\begin{remark}
Since $\kappa-4\tau^2\neq 0$, we get that $\delta=-\kappa(1-\nu^2)-4\tau^2\nu^2\leq0$ provided that $\kappa\geq 0$. However, if $\kappa<0$, the sign of $\delta$ might vary from point to point, but this has a geometric interpretation: $\delta(p)>0$ when $\gamma_p$ projects to a curve of $\mathbb{H}^2(\kappa)$ which is equidistant to a geodesic, $\delta(p)=0$ when $\gamma_p$ projects onto a horocycle of $\mathbb{H}^2(\kappa)$, and $\delta(p)<0$ when $\gamma_p$ projects onto a circle of $\mathbb{H}^2(\kappa)$. This agrees with the three different kinds of geodesics in $\mathbb{E}(\kappa,\tau)$, $\kappa<0$, whose explicit parameterizations can be found in~\cite{MN}.
\end{remark}

A straightforward calculation shows that the solution to the initial value problem \eqref{eq:ode}-\eqref{eq:initial_conditions} is given by
\begin{align} \nonumber
b_{11}(t)={}&1+4\delta^{-1} \tau ^2 a_{11}s_\delta(t) - 2 \delta^{-1}\tau  \left(a_{12}+\tau \right)(c_\delta(t)-1) - a_{11} \delta^{-1}
(\delta +4 \tau ^2)t,
\\ \nonumber
b_{21}(t)={}& 2 \tau\delta^{-1}  a_{11} \bigl(c_\delta(t)-1\bigr)- \bigl(a_{12}+\tau \bigr) s_\delta(t),
\\ \nonumber
b_{12}(t)={}&  \delta^{-1}\left(2 \tau    (2 \tau 
a_{12}+\delta +2 \tau ^2)s_\delta(t) -2 \tau   
a_{22} (c_\delta(t)-1)
-  (\delta +4 \tau ^2)
(a_{12}+\tau)t\right),
\\ \label{eq:b_ij}
b_{22}(t) ={}&1-a_{22} s_\delta(t)+\delta^{-1} (2 \tau a_{12}+\delta +2 \tau ^2)(c_\delta(t)-1),
\end{align}
where we have considered the auxiliary functions
\begin{equation*}
s_\delta(t)=\begin{cases}\frac{1}{\sqrt{\delta}}\sinh(t\sqrt{\delta})&\text{if }\delta>0,\\
\frac{1}{\sqrt{-\delta}}\sin(t\sqrt{-\delta})&\text{if }\delta<0,
\end{cases}\qquad 
c_\delta(t)=\begin{cases}\cosh(t\sqrt{\delta})&\text{if }\delta>0,\\
\cos(t\sqrt{-\delta})&\text{if }\delta<0.
\end{cases}
\end{equation*}

Now, for each $r\in(-\varepsilon,\varepsilon)$, we consider the linear endomorphisms $B(r)$ and $C(r)$ of $T_{\gamma_p(r)}\Sigma^r$ determined by the relations
\[
B(r)U_j(\gamma_p(r))=\zeta_j(r), \qquad C(r)U_j(\gamma_p(r))=\zeta_j'(r), \qquad i\in\{1,2\}.
\]
Then, Jacobi field theory (see~\cite[Theorem~10.2.1]{BCO}) guarantees that, since $\Sigma^r$ has dimension two, $\zeta_j(r)\neq 0$ for each $r\in(-\varepsilon,\varepsilon)$, and also that the shape operator $A^r$ of the equidistant surface $\Sigma^r$ with respect to the normal vector $\gamma_p'(r)$ is determined by $A^r\zeta_j(r)=-\zeta_j'(r)$.  Thus, the operator $B(r)$ is invertible for each $r\in(-\varepsilon,\varepsilon)$, and the shape operator $A^r$ is given by 
\begin{equation}\label{eq:A^r}
A^r=-C(r)B(r)^{-1}.
\end{equation}
With respect to the orthonormal basis $\{U_1(\gamma_p(r)),U_2(\gamma_p(r)) \}$ of $T_{\gamma_p(r)}\Sigma^r$, the operators $B(r)$ and $C(r)$ take the matrix forms
\begin{equation}\label{eq:BC}
B(r)=\begin{pmatrix}
b_{11}(r) & b_{12}(r)
\\
b_{21}(r) & b_{22}(r)
\end{pmatrix},
\;
C(r)=
\begin{pmatrix}
b_{11}'(r) +\tau b_{21}(r)& b_{12}'(r)+\tau b_{22}(r)
\\
b_{21}'(r) - \tau b_{11}(r) & b_{22}'(r) - \tau b_{12}(r)
\end{pmatrix},
\end{equation}
where the expression for $C(r)$ follows from~\eqref{eq:nabla_u_i}. By \eqref{eq:A^r} and~\eqref{eq:BC}, a direct computation shows that the mean curvature of $\Sigma^r$ is given by the function
\begin{equation*}
h(r)=\frac{1}{2}\mathop{\rm tr}  A^r=-\frac{1}{2}\mathop{\rm tr}\left[C(r)B(r)^{-1}\right]=-\frac{\frac{d}{dr}(\det B(r))}{2\det B(r)}.
\end{equation*}
Hence, the real function $f(r)=\frac{d}{dr}(\det B(r))+2h(r)\det B(r)$ vanishes identically. Using~\eqref{eq:b_ij} to compute $\det B(r)$ explicitly, some elementary calculations show that
\begin{align} \nonumber
\delta^2f(r)={}&\delta\bigl(\delta^2+3\delta\tau^2+4\tau^4+2\tau(\delta+4\tau^2)a_{12}
 + (\delta-4\tau^2)(a_{11}a_{22}-a_{12}^2)\bigr)  s_\delta(r)
\\ \nonumber
&-\delta^2(a_{11}+a_{22})c_\delta(r)
-\delta^2(\delta+4\tau^2)a_{11}r s_\delta(r)
\\ \nonumber
&
+\delta(\delta+4\tau^2)(a_{11}a_{22}-(\tau+a_{12})^2)r c_\delta(r)
\\ \nonumber
&-2\delta(\delta+4\tau^2)((\tau+a_{12})^2-a_{11}a_{22})rh(r)s_{\delta}(r)
\\ \nonumber
&-2\delta(\delta+4\tau^2)a_{11}rh(r)c_\delta(r)
-2\delta(\delta a_{22}-4\tau^2a_{11})h(r)s_\delta(r)
\\ \nonumber
&+2\bigl((\delta+4\tau^2)(\delta+4\tau a_{12})-8\tau^2(a_{11}a_{22}-a_{12}^2-\tau^2)\bigr) h(r)c_\delta(r)
\\ \label{eq:f}
&+8\tau\bigl(2\tau a_{11}a_{22}-(\tau+a_{12})(\delta+2\tau^2+2\tau a_{12})\bigr)h(r).
\end{align}

Note that $\delta$ and the entries $a_{ij}$ of the shape operator $A$ of $\Sigma$ are functions of the base point $p\in\Sigma$. However, by assumption, $\Sigma$ is isoparametric and, hence, the mean curvature $h(r)$ of the equidistant surface $\Sigma^r$ is constant, that is, it is independent of the chosen base point $p\in \Sigma$ of the normal geodesic $\gamma_p$. We will show that, in this case, the functions $\delta$ and $a_{ij}$ are independent of $p\in\Sigma$, which will prove that $\Sigma$ has constant principal curvatures, and also constant angle function because of~\eqref{eq:delta}.

Since $h(0)$ is the mean curvature of $\Sigma$, we have
\begin{equation}
\label{eq:a_22}
a_{22}=2h(0)-a_{11}.
\end{equation}
As $f\equiv 0$, all derivatives of $f$ at $0$ vanish. Taking derivatives in~\eqref{eq:f}, and using that $s_\delta'=c_\delta$ and $c_\delta'=\delta s_\delta$, we obtain the following relations:
\begin{align}
\label{eq:f'(0)}
0=f'(0)={}&\delta+2\tau^2-2h(0)(a_{11}+a_{22})+2a_{11}a_{22}-2a_{12}^2+2h'(0)
\\ \nonumber
0=f''(0)={}&2\bigl(\delta h(0)+2\tau^2h(0)+h''(0)\bigr)-4h(0)a_{12}^2
\\ \label{eq:f''(0)}
&+4h(0)a_{11}a_{22}-(\delta+4h'(0))a_{22}-\bigl(3\delta+8\tau^2+4h'(0)\bigr)a_{11}
\\\nonumber
0=f'''(0)={}&\delta^2-8\tau^4+6(\delta+2\tau^2)h'(0)+2h'''(0)-6h''(0)(a_{11}+a_{22})
\\\nonumber
&-2\delta h(0) a_{22}-4\tau(\delta+4\tau^2)a_{12}-4\bigl(\delta+2\tau^2+3h'(0)\bigr)a_{12}^2
\\ \label{eq:f'''(0)}
&-2 (3\delta+8\tau^2)h(0)a_{11}+4 \bigl(\delta+2\tau^2+3h'(0)\bigr)a_{11}a_{22}.
\end{align}
By changing the orientation of the unit normal $N$ to $\Sigma$ if necessary, we can assume that $a_{12}\geq 0$. Then, inserting \eqref{eq:a_22} into~\eqref{eq:f'(0)} and solving for $a_{12}$, we get
\begin{equation}\label{eq:a_12}
a_{12}=\frac{1}{\sqrt{2}}\sqrt{\delta+2\tau^2-4h(0)^2+2h'(0)+4h(0)a_{11}-2a_{11}^2}.
\end{equation}
Because of~\eqref{eq:delta} and the assumption that $\nu^2\neq 1$ on $\Sigma$, we have that $\delta+4\tau^2=(\nu^2-1)(\kappa-4\tau^2)\neq 0$ on $\Sigma$. Then, using \eqref{eq:a_22} and \eqref{eq:a_12} in \eqref{eq:f''(0)}, we get that
\begin{equation}\label{eq:a_11}
a_{11}=\frac{1}{\delta+4\tau^2}\left(4h(0)^3-6h(0)h'(0)+h''(0)-h(0)\delta\right).
\end{equation}
Finally, substituting~\eqref{eq:a_22}, \eqref{eq:a_12} and~\eqref{eq:a_11} into~\eqref{eq:f'''(0)}, after some calculations we obtain a relation of the form 
\[
q_1(\delta)\pm\tau\sqrt{q_2(\delta)}=0,
\]
where $q_1$ and $q_2$ are polynomials of degrees $2$ and $3$, respectively, with constant coefficients. Hence, if $\tau=0$, $\delta$ satisfies a polynomial equation with constant coefficients and degree $2$ and, if $\tau\neq0$, getting rid of the radical in the previous expression, we obtain that $\delta$ satisfies a polynomial equation with constant coefficients and degree~$4$. In any case, if follows that $\delta$ is constant and, because of the relations~\eqref{eq:a_22}, \eqref{eq:a_12} and \eqref{eq:a_11}, the principal curvatures of $\Sigma$ are constant, which concludes the~proof.


\begin{thebibliography}{OO}

\bibitem{AR} U.~Abresch, H.~Rosenberg.
\newblock A Hopf differential for constant mean curvature surfaces in $\mathbf{S}^2\times\mathbf{R}$ and $\mathbf{H}^2\times\mathbf{R}$.
\newblock \emph{Acta Math.}, \textbf{193} (2004), no.~2, 141--174.

\bibitem{AB}  M.~M.~Alexandrino, R.~G.~Bettiol. \emph{Lie groups and geometric aspects of isometric actions}. Springer, Cham, 2015. 

\bibitem{BCO} J.\ Berndt, S.\ Console, C.\ Olmos. \emph{Submanifolds and holonomy. Second Edition.} Monographs and Research Notes in Mathematics, CRC Press, Boca Raton, FL, 2016.

\bibitem{BTV} J.\ Berndt, F.\ Tricerri, L.\ Vanhecke.
\newblock Generalized Heisenberg groups and Damek-Ricci harmonic spaces. 
\newblock \emph{Lecture Notes in Mathematics}, \textbf{1598}. Springer-Verlag, Berlin, 1995.

\bibitem{CCJ} T.~Cecil, Q.-S.~Chi, G.~Jensen. Isoparametric hypersurfaces with four principal curvatures. \emph{Ann.\ of Math.\ (2)}, \textbf{166} (2007), no.~1, 1--76.

\bibitem{CS} R.~Chaves, E.~Santos. Hypersurfaces with constant principal curvatures in $\mathbb{S}^n\times\mathbb{R}$ and $\mathbb{H}^n\times\mathbb{R}$. Preprint available at arXiv:1503.03507.

\bibitem{Chi} Q.-S.~Chi. Isoparametric hypersurfaces with four principal curvatures, IV. Preprint available at arXiv:1605.00976.

\bibitem{Dan} B. Daniel.
\newblock Isometric immersions into 3-dimensional homogeneous manifolds.
\newblock \emph{Comment. Math. Helv.}, \textbf{82} (2007), no. 1, 87--131.

\bibitem{DD} J.~C.~D\'{\i}az-Ramos, M.\ Dom\'{\i}nguez-V\'azquez. Isoparametric hypersurfaces in Damek-Ricci spaces. \emph{Adv.\ Math.}, \textbf{239} (2013), 1--17.

\bibitem{DD:indiana} J.~C.~D\'{\i}az-Ramos, M.\ Dom\'{\i}nguez-V\'azquez. Non-Hopf real hypersurfaces with constant principal curvatures in complex space forms. \emph{Indiana Univ.\ Math.\ J.}, \textbf{60} (2011), no.~3, 859--882.

\bibitem{DDS} J.~C.~D\'iaz-Ramos, M.~Dom\'inguez-V\'azquez, V.~Sanmartín-López. Isoparametric hypersurfaces in complex hyperbolic spaces. \emph{Adv.\ Math.}, \textbf{314} (2017), 756--805.

\bibitem{Do} M.~Dom\'inguez-V\'azquez. Isoparametric foliations on complex projective spaces. \emph{Trans.\ Amer.\ Math.\ Soc.}, \textbf{368} (2016), no.~2, 1211--1249.

\bibitem{DG} M.~Domínguez-Vázquez, C.~Gorodski. Polar foliations on quaternionic projective spaces. \emph{Tohoku Math.\ J.}, to appear.

\bibitem{ER} J.~M.~Espinar, H.~Rosenberg.
\newblock Complete constant mean curvature surfaces in homogeneous spaces.
\newblock \emph {Comment. Math. Helv.}, \textbf{86} (2011), no. 3, 659--674.

\bibitem{Imm} S.~Immervoll. On the classification of isoparametric hypersurfaces with four principal curvatures in spheres. \emph{Ann.\ of Math.\ (2)}, \textbf{168} (2008), no.~3, 1011--1024.

\bibitem{Leite} M.~L.~Leite.
\newblock An elementary proof of the Abresch--Rosenberg theorem on constant mean curvature surfaces immersed in $\mathbb{S}^2\times\mathbb{R}$ and $\mathbb{H}^2\times\mathbb{R}$.
\newblock\emph{Quart. J. Math.}, \textbf{58} (2007), no. 4, 479--487.

\bibitem{Man} J.~M.~Manzano.
\newblock On the classification of Killing submersions and their isometries.
\newblock \emph{Pacific J. Math.}, \textbf{270} (2014), no. 2, 367--692.

\bibitem{Man2} J.~M.~Manzano.
\newblock Dual quadratic differentials and entire minimal graphs in Heisenberg space.
\newblock Preprint available at arXiv:1708.06671.

\bibitem{MN} J.~M.~Manzano, B.~Nelli.
\newblock Height and area estimates for constant mean curvature graphs in $\mathbb{E}(\kappa,\tau)$-spaces.
\newblock\emph{J. Geom. Anal.}, \textbf{27} (2017), no. 4, 3441--3473.

\bibitem{MS} J.~M.~Manzano, R.~Souam.
\newblock The classification of totally umbilical surfaces in homogeneous $3$-manifolds.
\newblock\emph{Math. Z.}, \textbf{279} (2015), no. 1--2, 557--576.

\bibitem{MP}
W.~H.~Meeks, J.~P\'{e}rez.
\newblock Constant mean curvature surfaces in metric Lie groups.
\newblock In \emph{Geometric Analysis: Partial Differential Equations and Surfaces}, Contemporary Mathematics (AMS) vol. 570 (2012), 25--110.

\bibitem{Mi} R.~Miyaoka. Isoparametric hypersurfaces with $(g, m) =(6, 2)$. \emph{Ann.\ of Math.\ (2)}, \textbf{177} (2013), no.~1, 53--110.

\bibitem{ST} R.~Souam, E.~Toubiana. 
\newblock Totally umbilic surfaces in homogeneous 3-manifolds. 
\newblock \emph{Comment. Math. Helv.}, \textbf{84} (2009), no. 3, 673--704.

\bibitem{Ur} F.~Urbano. On hypersurfaces of $\mathbb{S}^2\times\mathbb{S}^2$.
\newblock \emph{Comm. Anal. Geom.}, to appear.

\bibitem{V} S.~Verpoort. Hypersurfaces with a parallel higher fundamental form.
\newblock \emph{J. Geom.}, \textbf{105} (2014), no.~2, 223--242.

\end{thebibliography}
\end{document}